\definecolor{darkblue}{rgb}{0.0,0.0,0.3}
\newcommand{\makeheading}[2][]%
        {\hspace*{-\marginparsep minus \marginparwidth}%
         \begin{minipage}[t]{\textwidth+\marginparwidth+\marginparsep}%
             {\large \bfseries #2 \hfill #1}\\[-0.15\baselineskip]%
                 \rule{\columnwidth}{1pt}%
         \end{minipage}}
\newtheorem{theorem}{\bf{Theorem}}[section] 
\newtheorem{lemma}[theorem]{\bf{Lemma}}     
\theoremstyle{definition}
\title[Essential norm of the extensions of Stevi\'{c}-Sharma operator]{Essential norm of the extensions of Stevi\'{c}-Sharma operator on
some spaces of analytic functions}
\author[Hassanlou]{Mostafa Hassanlou$^*$}
\address{Engineering Faculty of Khoy, Urmia University of Technology, Urmia, Iran}
\email{m.hassanlou@urmia.ac.ir}
\author[Gissy]{Hussain Gissy$^*$}
\address{Department of Mathematics, Faculty of Science, Jazan University, P.O. Box 2097, Jazan 45142, Kingdom of Saudi Arabia.}
\email{hgissy@jazanu.edu.sa}
\keywords{Stevi\'{c}-Sharma operator, Essential norm, Bloch space.\\
\indent $^{*}$ Corresponding author}
\subjclass[2020]{47B38, 30H30}
\begin{document}

\vspace{1cm} \setcounter{page}{1} \thispagestyle{empty}
\maketitle
\begin{abstract}
In this paper, we consider two extensions  of Stevi\'{c}-Sharma operator and find estimations for the essential norm of
them from $\mathcal{Q}_K (p,q)$ and $H^{\infty}$ into weighted Bloch spaces.
\end{abstract}
\section{Introduction}
The essential norm of an operator is the distance of the operator  from the space of compact operators. Let $f$ belongs to the space of the analytic functions on the unit disc. Suppose that $\varphi$ is an analytic self
map of the unit disc. The composition operator induced by $\varphi$ is denoted by $C_{\varphi}$, and is defined by $C_{\varphi}f = f \circ \varphi$. The weighted
composition operator $u C_{\varphi}$, $u$ is an analytic function on the unit disc, is defined by $u C_{\varphi} f = u f \circ \varphi$. Through the decades, there has
been an interest on investigating the (weighted) composition operators on spaces of functions, which started somehow after the excellent book of Cowen and MacCluer, \cite{CM}. There are
several operators related to these operators  using derivative or integral such as $u C_{\varphi} D$, $D$ is the differentiation operator, $u C_{\varphi} D f = u f' \circ \varphi$. In \cite{SS1,SS2}, the Stevi\'{c}-Sharma operator was introduced and it can be viewed as sum of $u C_{\varphi}$ and $u C_{\varphi} D $ which is of importance due to the relation with
topological structure of the spaces of such operators.

Our aim in this paper is to estimating the essential norm of  two extensions of Stevi\'{c}-Sharma operator  on some well-known spaces of analytic functions, $H^{\infty}$ and
$\mathcal{Q}_K (p,q)$. As results we obtain compactness criteria for these operators which are proved before.
\section{Background Material}
Let $\mathbb{D}$ be unit disk $\{z\in\mathbb{C}:|z|<1\}$, $\mathcal{H}(\mathbb{D})$ be the space of all analytic functions on $\mathbb{D}$ and $\mathcal{S}(\mathbb{D})$ be
the set of all analytic self-maps of $\mathbb{D}$. Let $H^{\infty}$ be the space of
bounded analytic functions on $\mathbb{D}$ with sup-norm, $\| f \|_{\infty} = \sup_{z \in \mathbb{D}} |f(z)|$.

A positive continuous function $\phi$ on $[0,1)$ is called normal if there are two constants $b > a > 0$  and $\delta \in [0,1)$ such that
\begin{itemize}
  \item [(i)] $\frac{\phi(r)}{(1-r)^a}$ is decreasing on $[\delta,1)$ and
  $\frac{\phi(r)}{(1-r)^a} \downarrow 0$
  \item [(ii)] $\frac{\phi(r)}{(1-r)^b}$ is increasing  on $[\delta,1)$ and
  $\frac{\phi(r)}{(1-r)^b} \uparrow \infty$
\end{itemize}
as $r \rightarrow 1$. An example of normal functions is $(1-r^2)^{\alpha}$, $\alpha >0$. Let $\mu$ be a positive function that is normal and radial, $\mu(z) = \mu(|z|)$. The weighted Bloch space $\mathcal{B}_{\mu}$ is the space of all analytic functions $f$ for which
$$ \sup_{z \in \mathbb{D}} \mu(z) |f'(z)| < \infty, $$
and it becomes a Banach space if we add $|f(0)|$ to the above statement. The $\alpha$-Bloch space $\mathcal{B}^{\alpha}$ is obtained when $\mu(z) = (1-|z|^2)^{\alpha}$ for $\alpha > 0$, see \cite{zhu1}. Also for $f \in \mathcal{B}^{\alpha}$,
\begin{equation}
\| f \|_{\mathcal{B}^{\alpha}} \approx
|f'(0)| + \cdots + |f^{(n)}(0)| + \sup_{z\in \mathbb{D}} (1-|z|^2)^{\alpha +n} |f^{(n+1)}(z)|, \ \ \ n \in \mathbb{N}.
\end{equation}
Let $p>0$, $q>-2$ and $ K: [0,\infty) \longrightarrow [0,\infty) $ be a nondecreasing continuous function. The space $\mathcal{Q}_K(p,q)$ consists of all $f \in \mathcal{H}(\mathbb{D})$ such that
\begin{equation*}
  \|f\|_{\mathcal{Q}_K (p,q)}^p=|f(0)|+\sup_{\xi\in\mathbb{D}}\int_\mathbb{D}|f'(z)|^p\left(1-|z|^2\right)^qK(g(z,\xi))\ dA(z)<\infty,
\end{equation*}
where $dA$ is the normalized Lebesgue area measure in $\mathbb{D}$, $g(z,\xi)=\log\frac{1}{\left|\varphi_\xi(z)\right|}$ is the Green function and $\varphi_\xi(z)=\frac{\xi-z}{1-\bar{\xi}z}$. For $p\geq 1$, $\mathcal{Q}_K(p,q)$ with the norm $\|f\|_{\mathcal{Q}_K(p,q)}$ becomes a Banach space. For more information on these spaces and operators on them, see \cite{kot,Liu,MHV,M,P,Ren}.  Following \cite{Wz}, we assume that the following condition holds
\begin{equation*}
  \int_0^1\left(1-r^2\right)^q\ K(-\log\ r)r\ dr<\infty,
\end{equation*}
since otherwise $\mathcal{Q}_K(p,q)$ consists only of constant functions. For $f \in \mathcal{Q}_K(p,q)$ we have $f \in \mathcal{B}^{\frac{q+2}{p}}$ and
\begin{equation}\label{r40}
  \|f \|_{\mathcal{B}^{\frac{q+2}{p}}} \leq  \|f \|_{\mathcal{Q}_K(p,q)}.
\end{equation}
A function $h$ with derivative $|h'(z)|^p = |1-z|^{-q-2}$ represents an extremal growth in $ \mathcal{B}^{\frac{q+2}{p}}$ and by Lemma 2.7 \cite{kot}, there exist parameters $p,q$ and $K$
such that $h \in \mathcal{Q}_K(p,q) \subsetneq \mathcal{B}^{\frac{q+2}{p}}$. Also taking $K(t)= t^s$, $0 < s<1$ and $q> -s-1$ imply that
$\mathcal{Q}_K(p,q)$ coincides with the non-trivial $F(p,q,s)$ space which includes some other well-known spaces of analytic functions, see \cite{zhao}. For some researches regarding $\mathcal{Q}_K(p,q)$ space, one can refer to \cite{ZX,Liu,M,Ren,Wz,Y,Y}.

For $\psi_1, \psi_2 \in \mathcal{H}(\mathbb{D})$ and $\varphi \in \mathcal{S}(\mathbb{D})$, the Stevi\'{c}-Sharma operator is defined by
$$ (T_{\psi_1, \psi_2, \varphi} f)(z) = \psi_1 (z) f(\varphi(z)) +\psi_2 (z) f'(\varphi(z)), \ \ \ z \in \mathbb{D}, \ f \in H(\mathbb{D}). $$
This operator includes composition, multiplication, weighted composition, differentiation,  composition followed by differentiation operator and other operators like $D M_u$, $C_{\varphi}D$, $D C_{\varphi}$, $M_u C_{\varphi} D$, $M_u D C_{\varphi}$, $C_{\varphi} M_u D$, $D M_u C_{\varphi}$, $C_{\varphi} D M_u$ and
$D C_{\varphi} M_u$.
The extensions of this operator, called Stevi\'{c}-Sharma type operator, we use here are $ T_{\psi_1, \psi_2, \varphi}^n$  and $T_{\psi_1, \psi_2, \varphi}^{m,n}$:
$$ (T_{\psi_1, \psi_2, \varphi}^n f)(z) = \psi_1 (z) f^{(n)}(\varphi(z)) +\psi_2 (z) f^{(n+1)}(\varphi(z)), \ \ \ n \in \mathbb{N}_0  $$
$$ (T_{\psi_1, \psi_2, \varphi}^n f)(z) = \psi_1 (z) f^{(m)}(\varphi(z)) +\psi_2 (z) f^{(n)}(\varphi(z)), \ \ \ m \in \mathbb{N}_0, n \in \mathbb{N}. $$

The properties of generalized Stevi\'{c}-Sharma operator on spaces of fractional
Cauchy transforms, Hardy spaces, derivative Hardy spaces and minimal Mobius invariant spaces have been investigated in \cite{AH,ALH,H,ZG1}. The boundedness and compactness of $ T_{\psi_1, \psi_2, \varphi}^n$ from Hardy space into Zygmund-type space were studied in \cite{GL}. Guo and Mu characterized  boundedness and  essential norm of $T_{\psi_1, \psi_2, \varphi}^{m,n}$ from derivative Hardy spaces into Zygmund-type spaces in \cite{GM}. The authors of \cite{ZX}, investigated the boundedness and compactness of $ T_{\psi_1, \psi_2, \varphi}^n$  from $\mathcal{Q}_K(p,q)$ and $\mathcal{Q}_{K,0}(p,q)$ spaces to Bloch-type spaces and little Bloch-type spaces. The same research has been done for the operator $T_{\psi_1, \psi_2, \varphi}^{m,n}$ between $H^{\infty}$ and  Bloch-type spaces (little Bloch-type spaces) in \cite{ZG}.

Guo and Zhao in \cite{ZX} proved that $T_{\psi_1, \psi_2, \varphi}^n :\mathcal{Q}_K \left(p,q\right)\to {\mathcal B}_{\mu }$ is compact if and only if
$$ A(\psi_1', \varphi, \gamma + n-1 )=
A(\psi_1 \varphi' + \psi_2', \varphi, \gamma +n)=
A(\psi_2 \varphi', \varphi, \gamma + n+1)=0,  $$
where the notations are defined in \eqref{r300}. Also Zhang and Guo in \cite{ZG} proved that $T_{\psi_1, \psi_2, \varphi}^{m,n} :H^{\infty}\to {\mathcal B}_{\mu }$ , $m+1 < n$, is compact if and only if
$$ \sum_{i=1}^4 \lim_{|\varphi(z)| \rightarrow 1} \| T_{\psi_1, \psi_2, \varphi}^{m,n} f_{i, \varphi(z)} \|_{\mathcal{B}_{\mu}} = \sum_{i\in \{ m, m+1 , n, n+1 \}} \lim_{|\varphi(z)| \rightarrow 1} \frac{\mu(z) |E_i (z)|}{(1-|\varphi(z)|^2)^i} =0, $$
where the notations are defined in Section 4. Motivated by the above results, we are going to extend them and compute approximately the essential norm of $T_{\psi_1, \psi_2, \varphi}^n :\mathcal{Q}_K \left(p,q\right)\to {\mathcal B}_{\mu }$ and $T_{\psi_1, \psi_2, \varphi}^{m,n} :H^{\infty}\to {\mathcal B}_{\mu }$.
Recall that the compactness characterization is obtained from essential norm since the operator is compact if and only if the essential norm is zero.

All positive constants will be denoted by $C$ which may vary from one occurrence to another. By $A\gtrsim B$ we mean there exists a constant $C$ such that $A\geq CB$ and $A\approx B$ means that $A\gtrsim B\gtrsim A$.
\section{Essential norm of $T_{\psi_1, \psi_2, \varphi}^n :\mathcal{Q}_K \left(p,q\right)\to {\mathcal B}_{\mu }$}
Throughout this section, we assume that the following condition holds
\begin{equation}\label{GrindEQ__8_}
\int^{1}_{0} K(-\log r){\left(1-r\right)}^{\min\left\{-1,q\right\}}\left(\log \frac{1}{1-r}\right)^{\chi_{-1}(q)}rdr<\infty,
\end{equation}
where $\chi_O\left(x\right)$ is the characteristic function of the set $O$.
One of the well-known criteria for studying compact operators is stated here, see \cite{CM}.
\begin{lemma}\label{l1}
Let $\psi_1, \psi_2 \in \mathcal{H}(\mathbb{D})$, $\varphi \in \mathcal{S}(\mathbb{D})$, $n \in \mathbb{N}_0$, $p>0$, $q>-2$,  $K:[0,\infty)\to [0,\infty)$ be a nondecreasing continuous function.  Then
$T_{\psi_1, \psi_2, \varphi}^n :\mathcal{Q}_K \left(p,q\right)\to {\mathcal B}_{\mu }$ is compact  if and only if $T_{\psi_1, \psi_2, \varphi}^n :\mathcal{Q}_K \left(p,q\right)\to {\mathcal B}_{\mu }$ is bounded and  for any bounded sequence $\{f_i\}_{i=1}^\infty$ in $\mathcal{Q}_K \left(p,q\right)$ which converges to zero uniformly on compact subsets of $\mathbb{D}$, as $ i \rightarrow \infty$, we have
$\| T_{\psi_1, \psi_2, \varphi}^n f_i \|_{\mathcal{B}_{\mu}} \rightarrow 0$ as $i \rightarrow \infty$.
\end{lemma}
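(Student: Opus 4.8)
The plan is to prove both implications by the standard normal-families argument, leaning on the embedding $\mathcal{Q}_K(p,q) \hookrightarrow \mathcal{B}^{(q+2)/p}$ recorded in \eqref{r40}. The crucial preliminary observation is that point evaluations are bounded linear functionals on $\mathcal{Q}_K(p,q)$: from \eqref{r40} together with the standard growth estimate for functions in $\mathcal{B}^{(q+2)/p}$, one controls $|f(z)|$ on each compact subset of $\mathbb{D}$ by a constant times $\|f\|_{\mathcal{Q}_K(p,q)}$, and Cauchy's estimates then bound $|f^{(n)}(w)|$ and $|f^{(n+1)}(w)|$ likewise on compacta. Consequently every norm-bounded subset of $\mathcal{Q}_K(p,q)$ is locally uniformly bounded, hence a normal family by Montel's theorem, and norm convergence in $\mathcal{B}_{\mu}$ forces uniform-on-compacts convergence of the function together with its derivative.

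For the forward implication, suppose $T_{\psi_1,\psi_2,\varphi}^n$ is compact and let $\{f_i\}$ be bounded in $\mathcal{Q}_K(p,q)$ with $f_i \to 0$ uniformly on compact subsets of $\mathbb{D}$. Since $\varphi$ maps $\mathbb{D}$ into $\mathbb{D}$, the functions $f_i^{(n)}\circ\varphi$ and $f_i^{(n+1)}\circ\varphi$ tend to $0$ pointwise, so $T_{\psi_1,\psi_2,\varphi}^n f_i \to 0$ pointwise on $\mathbb{D}$. Compactness gives that every subsequence of $\{T_{\psi_1,\psi_2,\varphi}^n f_i\}$ has a further subsequence converging in the norm of $\mathcal{B}_{\mu}$, and by the preliminary observation any such norm limit must agree with the pointwise limit $0$. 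Since every subsequence admits a subsequence converging to $0$, the full sequence satisfies $\|T_{\psi_1,\psi_2,\varphi}^n f_i\|_{\mathcal{B}_{\mu}} \to 0$.

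For the reverse implication, assume $T_{\psi_1,\psi_2,\varphi}^n$ is bounded and satisfies the stated sequential condition, and let $\{f_i\}$ be an arbitrary bounded sequence in $\mathcal{Q}_K(p,q)$, say $\|f_i\|_{\mathcal{Q}_K(p,q)} \le M$. By normality we pass to a subsequence $\{f_{i_k}\}$ converging uniformly on compact subsets to some $f \in \mathcal{H}(\mathbb{D})$; a Fatou-type argument applied to the defining integral of $\|\cdot\|_{\mathcal{Q}_K(p,q)}$ shows $f \in \mathcal{Q}_K(p,q)$ with $\|f\|_{\mathcal{Q}_K(p,q)} \le M$. Then $g_k := f_{i_k} - f$ is bounded in $\mathcal{Q}_K(p,q)$ and converges to $0$ uniformly on compacta, so the hypothesis yields $\|T_{\psi_1,\psi_2,\varphi}^n g_k\|_{\mathcal{B}_{\mu}} \to 0$, that is, $T_{\psi_1,\psi_2,\varphi}^n f_{i_k} \to T_{\psi_1,\psi_2,\varphi}^n f$ in $\mathcal{B}_{\mu}$. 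Thus $\{T_{\psi_1,\psi_2,\varphi}^n f_i\}$ has a convergent subsequence, and $T_{\psi_1,\psi_2,\varphi}^n$ is compact.

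The main obstacle I expect is the preliminary step: verifying that bounded subsets of $\mathcal{Q}_K(p,q)$ form a normal family and that the norm is lower semicontinuous under uniform-on-compacts convergence. Both reduce to the embedding \eqref{r40} and the standard pointwise growth bounds for $\alpha$-Bloch functions, so once these are in hand the two implications are routine; the boundedness hypothesis is needed in the reverse direction only to guarantee that $T_{\psi_1,\psi_2,\varphi}^n f$ is a well-defined element of $\mathcal{B}_{\mu}$.
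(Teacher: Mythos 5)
Your proof is correct: the forward direction via the subsequence-of-subsequences argument (using that point evaluation of $g$ and $g'$ is bounded on $\mathcal{B}_\mu$ because $\mu$ is positive and continuous, hence bounded below on compacta), and the reverse direction via Montel plus Fatou applied to the defining integral of $\mathcal{Q}_K(p,q)$, are both sound. The paper offers no proof of this lemma at all---it simply cites Cowen--MacCluer---and your argument is precisely the standard normal-families proof (Proposition 3.11 of that book, adapted to $\mathcal{Q}_K(p,q)$ via the embedding \eqref{r40}) that the citation stands for, so there is nothing in the paper it diverges from.
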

In the following lemma, we prove that $T_{\psi_1, \psi_2, \varphi}^n :\mathcal{Q}_K \left(p,q\right)\to {\mathcal B}_{\mu }$ is compact provided that it is bounded and
$\rho = \| \varphi \|_{\infty} = \sup_{z\in \mathbb{D}} |\varphi(z)|<1$. Note that regarding boundedness of the operator $T_{\psi_1, \psi_2, \varphi}^n :\mathcal{Q}_K \left(p,q\right)\to {\mathcal B}_{\mu }$ and applying it to the polynomials $z^n$, $z^{n+1}$ and $z^{n+2}$, we have the following facts which we use in the proof of results of the paper:
\begin{equation}\label{r201}
  \sup_{z \in \mathbb{D}} \mu(z) |\psi_1'(z)| <  \infty,
\end{equation}
\begin{equation}\label{r202}
  \sup_{z \in \mathbb{D}} \mu(z) |\psi_1(z) \varphi'(z) + \psi_2'(z)| <  \infty,
\end{equation}
\begin{equation}\label{r203}
  \sup_{z \in \mathbb{D}} \mu(z) |\psi_2(z) \varphi'(z)| <  \infty.
\end{equation}
\begin{lemma}\label{l2}
 Let $\psi_1, \psi_2 \in \mathcal{H}(\mathbb{D})$, $\varphi \in \mathcal{S}(\mathbb{D})$, $n \in \mathbb{N}_0$, $p>0$, $q>-2$,  $K:[0,\infty)\to [0,\infty)$ be a nondecreasing continuous function. If $\rho <1$ and $T_{\psi_1, \psi_2, \varphi}^n :\mathcal{Q}_K \left(p,q\right)\to {\mathcal B}_{\mu }$ is bounded, then $T_{\psi_1, \psi_2, \varphi}^n :\mathcal{Q}_K \left(p,q\right)\to {\mathcal B}_{\mu }$ is compact.
\end{lemma}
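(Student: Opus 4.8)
The plan is to verify the hypotheses of the sequential compactness criterion in Lemma \ref{l1}. So I would begin with an arbitrary sequence $\{f_i\}$ that is bounded in $\mathcal{Q}_K(p,q)$ and converges to $0$ uniformly on compact subsets of $\mathbb{D}$, and aim to show $\|T_{\psi_1,\psi_2,\varphi}^n f_i\|_{\mathcal{B}_{\mu}} \to 0$ as $i \to \infty$. Writing $g_i = T_{\psi_1,\psi_2,\varphi}^n f_i$, the first step is to differentiate and regroup:
\[
g_i'(z) = \psi_1'(z) f_i^{(n)}(\varphi(z)) + \bigl(\psi_1(z)\varphi'(z) + \psi_2'(z)\bigr) f_i^{(n+1)}(\varphi(z)) + \psi_2(z)\varphi'(z) f_i^{(n+2)}(\varphi(z)).
\]
The purpose of this grouping is that the three coefficients appearing here are precisely the quantities controlled by the boundedness facts \eqref{r201}, \eqref{r202} and \eqref{r203}.

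The key observation is that $|\varphi(z)| \leq \rho < 1$ for every $z$, so $\varphi$ takes values in the compact disk $\overline{D(0,\rho)} \subset \mathbb{D}$. Fixing any $\rho'$ with $\rho < \rho' < 1$ and invoking the Cauchy integral estimates on the circle $|w| = \rho'$, the uniform convergence $f_i \to 0$ on $\overline{D(0,\rho')}$ upgrades to uniform convergence of each of the derivatives $f_i^{(n)}, f_i^{(n+1)}, f_i^{(n+2)}$ to $0$ on $\overline{D(0,\rho)}$. Consequently the single quantity
\[
\varepsilon_i := \max_{|w|\leq \rho}\bigl\{ |f_i^{(n)}(w)|,\ |f_i^{(n+1)}(w)|,\ |f_i^{(n+2)}(w)| \bigr\}
\]
tends to $0$ as $i \to \infty$.

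Combining the two observations, I would multiply the displayed identity for $g_i'$ by $\mu(z)$, bound each $|f_i^{(k)}(\varphi(z))|$ by $\varepsilon_i$, and take the supremum over $z$, obtaining
\[
\sup_{z\in\mathbb{D}} \mu(z)|g_i'(z)| \leq \varepsilon_i \Bigl( \sup_z \mu|\psi_1'| + \sup_z \mu|\psi_1\varphi'+\psi_2'| + \sup_z \mu|\psi_2\varphi'| \Bigr),
\]
where the three suprema are finite by \eqref{r201}--\eqref{r203}. A parallel and simpler estimate disposes of the point-value term $|g_i(0)|$, since $\varphi(0) \in \overline{D(0,\rho)}$ forces $|f_i^{(n)}(\varphi(0))|$ and $|f_i^{(n+1)}(\varphi(0))|$ to vanish in the limit. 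Both pieces are bounded by a constant multiple of $\varepsilon_i$, hence $\|g_i\|_{\mathcal{B}_{\mu}} \to 0$, and Lemma \ref{l1} then yields the compactness of $T_{\psi_1,\psi_2,\varphi}^n$.

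I do not expect a serious obstacle in this argument; the only point demanding a little care is the passage from uniform-on-compacts convergence of $f_i$ to uniform convergence of the \emph{higher} derivatives $f_i^{(n+1)}$ and $f_i^{(n+2)}$ on the range of $\varphi$, which is exactly where the Cauchy estimate on the slightly enlarged disk $\overline{D(0,\rho')}$ enters. Everything else is a routine combination of the chain rule with the a priori bounds \eqref{r201}--\eqref{r203} supplied by the assumed boundedness of the operator.
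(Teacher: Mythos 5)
Your proposal is correct and follows essentially the same route as the paper's own proof: reduce to the sequential criterion of Lemma \ref{l1}, expand $(T_{\psi_1,\psi_2,\varphi}^n f_i)'$ into the three terms with coefficients $\psi_1'$, $\psi_1\varphi'+\psi_2'$, $\psi_2\varphi'$, use Cauchy estimates to get uniform convergence of $f_i^{(n)}, f_i^{(n+1)}, f_i^{(n+2)}$ to zero on the compact image region $|w|\leq\rho$, and invoke the finiteness conditions \eqref{r201}--\eqref{r203}. If anything, your write-up is slightly more careful than the paper's (you correctly identify the compact set as $\overline{D(0,\rho)}$ in the image and make the Cauchy-estimate step on the enlarged disk explicit), so there is nothing to fix.
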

\begin{proof}
Suppose that  ${\left\{f_i\right\}}_{i}$ is a bounded sequence in  $\mathcal{Q}_K \left(p,q\right)$ such that converges to 0 uniformly on compact subsets of $\mathbb{D}$ as $i\to\infty$. According to Lemma \ref{l1}, we prove that
$\| T_{\psi_1, \psi_2, \varphi}^n f_i \|_{\mathcal{B}_{\mu}} \rightarrow 0$ as $i\rightarrow \infty$.
 The Cauchy's estimate implies that $\{f_i^{(n)}\}_{i}$, $\{f_i^{(n+1)}\}_{i}$ and $\{f_i^{(n+2)}\}_{i}$ converge uniformly  to 0 on compact subsets of $\mathbb{D}$.
Computing the norm in weighted Bloch space we get
\begin{align*}
\nonumber
\| T_{\psi_1, \psi_2, \varphi}^n f_i \|_{\mathcal{B}_\mu}=&|(T_{\psi_1, \psi_2, \varphi}^n f_i )(0)|+\sup_{z\in \mathbb{D}} \mu(z) |(T_{\psi_1, \psi_2, \varphi}^n f_i )'(z)|\\
\leq &|\psi_1 (0) f^{(n)}_{i}(\varphi(0))| + |\psi_2 (0) f^{(n+1)}_{i}(\varphi(0))| \\
 & +\sup_{z\in \mathbb{D}} \mu(z)  |\psi_1'(z) f^{(n)}_{i}(\varphi(z))|\\
& +\sup_{z\in \mathbb{D}} \mu(z) |(\psi_1(z) \varphi'(z) + \psi_2 (z))f^{(n+1)}_{i}(\varphi(z))|\\
& + \sup_{z\in \mathbb{D}} \mu(z) |\psi_2(z) \varphi'(z) f^{(n+2)}_{i}(\varphi(z))|\\
\leq &|\psi_1 (0) f^{(n)}_{i}(\varphi(0))| + |\psi_2 (0) f^{(n+1)}_{i}(\varphi(0))| \\
 & +\sup_{z\in \mathbb{D}} \mu(z)  |\psi_1'(z)| \sup_{z\in D_{\rho}}  |f^{(n)}_{i}(\varphi(z))|\\
& +\sup_{z\in \mathbb{D}} \mu(z)|\psi_1(z) \varphi'(z) + \psi_2 (z)| \sup_{z\in D_{\rho}}  |f^{(n+1)}_{i}(\varphi(z))|\\
& +\sup_{z\in \mathbb{D}} \mu(z) |\psi_2(z) \varphi'(z) | \sup_{z\in D_{\rho}}  |f^{(n+2)}_{i}(\varphi(z))|,
\end{align*}
where $D_{\rho} = \{ z \in \mathbb{D}: |\varphi| \leq \rho \}$ which is a compact subset of $\mathbb{D}$. Using \eqref{r201} to \eqref{r203}, we obtain that if  $i \rightarrow \infty$ then $\| T_{\psi_1, \psi_2, \varphi}^n f_i \|_{\mathcal{B}_\mu} \rightarrow 0$.
\end{proof}
For simplifying the calculations, we set $\gamma = \frac{q+2}{p}$ and
\begin{equation} \label{r300}
  A(u, \varphi, \gamma ) = \limsup_{|\varphi(z)|\rightarrow 1} \frac{\mu (z)|u(z)|}{(1-|\varphi(z)|^2)^{\gamma}}.
\end{equation}
\begin{theorem}
  Let $\psi_1, \psi_2 \in \mathcal{H}(\mathbb{D})$, $\varphi \in \mathcal{S}(\mathbb{D})$, $n \in \mathbb{N}_0$, $p>0$, $q>-2$,  $K:[0,\infty)\to [0,\infty)$ be a nondecreasing continuous function and $\mu$ be a normal and radial weight. If $T_{\psi_1, \psi_2, \varphi}^n :\mathcal{Q}_K (p,q)\to {\mathcal B}_{\mu }$ is bounded, then
 \begin{align} \label{r204}
 &\|T_{\psi_1, \psi_2, \varphi}^n \|_{e, \mathcal{Q}_K (p,q)\to {\mathcal B}_{\mu }} \approx \\
 &\max \{ A(\psi_1', \varphi, \gamma + n-1 ),
A(\psi_1 \varphi' + \psi_2', \varphi, \gamma +n),
A(\psi_2 \varphi', \varphi, \gamma + n+1)
\}, \nonumber
  \end{align}
\end{theorem}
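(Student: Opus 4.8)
The plan is to prove the asserted equivalence as two matching one-sided inequalities, exploiting that the essential norm is the distance from $T_{\psi_1, \psi_2, \varphi}^n$ to the compact operators $\mathcal{Q}_K(p,q)\to\mathcal{B}_\mu$. Both directions rest on the same pointwise growth estimate: by the embedding \eqref{r40} together with the $\mathcal{B}^\gamma$ derivative characterization, every $f$ in the unit ball of $\mathcal{Q}_K(p,q)$ satisfies $|f^{(n+j)}(w)|\lesssim (1-|w|^2)^{-(\gamma+n-1+j)}$ for $j=0,1,2$. Differentiating gives
\[
(T_{\psi_1, \psi_2, \varphi}^n f)'(z)=\psi_1'(z)f^{(n)}(\varphi(z))+(\psi_1(z)\varphi'(z)+\psi_2'(z))f^{(n+1)}(\varphi(z))+\psi_2(z)\varphi'(z)f^{(n+2)}(\varphi(z)),
\]
so the three coefficient/derivative pairs correspond precisely to $A(\psi_1',\varphi,\gamma+n-1)$, $A(\psi_1\varphi'+\psi_2',\varphi,\gamma+n)$ and $A(\psi_2\varphi',\varphi,\gamma+n+1)$.

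For the lower estimate I would take a sequence $z_i$ with $|\varphi(z_i)|\to1$, write $w_i=\varphi(z_i)$, and build three families of test functions out of kernel-type functions $g_{s,w}(z)=(1-|w|^2)^{a_s}(1-\bar wz)^{-b_s}$, with exponents tuned so that each $g_{s,w}$ lies in a fixed ball of $\mathcal{Q}_K(p,q)$; this normalization is exactly where the integrability hypothesis \eqref{GrindEQ__8_} enters, since $\mathcal{Q}_K(p,q)$ may be strictly smaller than $\mathcal{B}^\gamma$. Taking linear combinations $f_{j,w_i}=\sum_s c_{s,j}(w_i)g_{s,w_i}$ I would arrange that $f_{j,w_i}$ isolates the order $n+j-1$ derivative at $w_i$ (diagonalizing the $3\times 3$ matrix of values $f^{(n)},f^{(n+1)},f^{(n+2)}$ at $w_i$, after the natural scaling), while staying bounded in $\mathcal{Q}_K(p,q)$ and tending to $0$ uniformly on compacta. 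Since any compact $S$ sends such sequences to $0$ in $\mathcal{B}_\mu$, one has $\|T_{\psi_1, \psi_2, \varphi}^n-S\|\ge\limsup_i\|T_{\psi_1, \psi_2, \varphi}^n f_{j,w_i}\|_{\mathcal{B}_\mu}\ge\limsup_i\mu(z_i)|(T_{\psi_1, \psi_2, \varphi}^n f_{j,w_i})'(z_i)|$, and the isolation property retains only the $j$-th term; optimizing over the sequence gives
\[
\|T_{\psi_1, \psi_2, \varphi}^n\|_e\gtrsim\max\{A(\psi_1',\varphi,\gamma+n-1),\,A(\psi_1\varphi'+\psi_2',\varphi,\gamma+n),\,A(\psi_2\varphi',\varphi,\gamma+n+1)\}.
\]

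For the upper estimate I would use the compact approximants $K_rf=T_{\psi_1, \psi_2, \varphi}^n(f_r)$, where $f_r(z)=f(rz)$ for $0<r<1$; these are compact because dilation sends bounded sets of $\mathcal{Q}_K(p,q)$ to normal families whose images lie in $r\mathbb{D}$. Then $\|T_{\psi_1, \psi_2, \varphi}^n\|_e\le\limsup_{r\to1}\|T_{\psi_1, \psi_2, \varphi}^n(I-R_r)\|$ with $R_rf=f_r$, and for $\|f\|_{\mathcal{Q}_K(p,q)}\le1$ I would split $\sup_z\mu(z)|(T_{\psi_1, \psi_2, \varphi}^n(f-f_r))'(z)|$ over $\{|\varphi(z)|\le\delta\}$ and $\{|\varphi(z)|>\delta\}$. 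On the first region each $(f-f_r)^{(n+j)}$ converges to $0$ uniformly on $\overline{\delta\mathbb{D}}$ as $r\to1$, uniformly over the unit ball, and combined with \eqref{r201}--\eqref{r203} this part vanishes in the limit. On the second region the growth estimates (using $\|f_r\|_{\mathcal{Q}_K(p,q)}\lesssim\|f\|_{\mathcal{Q}_K(p,q)}$) bound the three terms by $C(1-|\varphi(z)|^2)^{-(\gamma+n-1+j)}$, so the supremum there is controlled by the defining suprema of the $A(\cdot)$ quantities over $\{|\varphi(z)|>\delta\}$. Letting first $r\to1$ and then $\delta\to1$ yields $\|T_{\psi_1, \psi_2, \varphi}^n\|_e\lesssim\max\{\cdots\}$, completing the equivalence.

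I expect the main obstacle to be the test-function construction in the lower bound: for each boundary point $w$ one must produce functions that simultaneously stay in a fixed ball of $\mathcal{Q}_K(p,q)$ (which genuinely requires \eqref{GrindEQ__8_}, not merely membership in $\mathcal{B}^\gamma$) and diagonalize the three derivative orders at $w$. Establishing invertibility, with uniformly controlled inverse as $|w|\to1$, of the associated $3\times3$ derivative matrix is the delicate point; once these functions are in hand, both the lower bound and the compact-approximation argument for the upper bound follow the standard scheme.
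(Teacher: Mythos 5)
Your proposal is correct and takes essentially the same route as the paper: the lower bound via linear combinations of kernels $(1-|w|^2)^{a}(1-\overline{w}z)^{-b}$ that isolate the derivative orders $n$, $n+1$, $n+2$ at $\varphi(z_k)$ (the paper resolves the ``delicate'' $3\times 3$ diagonalization you flag by writing the three combinations explicitly and citing \cite{ZX} for their uniform boundedness in $\mathcal{Q}_K(p,q)$), and the upper bound via dilation-type compact approximants together with the split over $\{|\varphi(z)|\le\delta\}$ and $\{|\varphi(z)|>\delta\}$ and the limits $r\to 1$, then $\delta\to 1$. The only cosmetic difference is that the paper's approximants are $T^n_{\psi_1,\psi_2,r_k\varphi}$ rather than your $T^n_{\psi_1,\psi_2,\varphi}R_{r_k}$, which agree up to the harmless factors $r_k^{n+j}$, and the paper additionally treats the degenerate case $\|\varphi\|_\infty<1$ separately, which your argument covers implicitly.
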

\begin{proof}
  If $\rho<1$, then employing Lemma \ref{l2}, $T_{\psi_1, \psi_2, \varphi}^n :\mathcal{Q}_K \left(p,q\right)\to {\mathcal B}_{\mu }$ is compact and then $\|T_{\psi_1, \psi_2, \varphi}^n \|_{e, \mathcal{Q}_K (p,q)\to {\mathcal B}_{\mu }} =0$. On the other hand, Theorem 2 of \cite{ZX} implies that
  $$ A(\psi_1', \varphi, \gamma + n-1 ) =0, \
A(\psi_1 \varphi' + \psi_2', \varphi, \gamma +n) =0, \
A(\psi_2 \varphi', \varphi, \gamma + n+1)=0.$$
So, both sides of \eqref{r204} are equal.
Let   $\rho=1$. The proof of the lower bound is based on the choosing suitable test
functions. Let $\left\{{z }_k \right\}$ be a sequence in $\mathbb{D}$ such that   $\lim_{k\to \infty }\left|\varphi \left({z }_k\right)\right|=1$. Let
$$l_i (z) = \frac{(1-|z_k|^2)^i}{(1- \overline{z_k}z)^{\gamma + i -1}}, \ \ \ a \in \mathbb{D}, \  i=1,2,3.$$
Consider the functions
\begin{align*}
f_{z_k} (z)  = &  \frac{\gamma + n+2}{\gamma +n} \prod_{j=0}^{n-1} (\gamma +j+1)(\gamma +j+2) l_{1} (z)\\
&  -2 \frac{\gamma + n+2}{\gamma +n+1} \prod_{j=0}^{n-1} (\gamma +j)(\gamma +j+2) l_{2} (z)
+  \prod_{j=0}^{n-1} (\gamma +j)(\gamma +j+1) l_{3} (z), \\
g_{z_k} (z)  = &  \frac{\gamma + n+2}{\gamma +n+1} \prod_{j=0}^{n-1} (\gamma +j+1)(\gamma +j+2) l_{1} (z)\\
&  - \frac{2 \gamma + 2n+3}{\gamma +n+1} \prod_{j=0}^{n-1} (\gamma +j)(\gamma +j+2) l_{2} (z)
+  \prod_{j=0}^{n-1} (\gamma +j)(\gamma +j+1) l_{3} (z), \\
h_{z_k} (z)  = &   \prod_{j=0}^{n-1} (\gamma +j+1)(\gamma +j+2) l_{1} (z)\\
&  -2  \prod_{j=0}^{n-1} (\gamma +j)(\gamma +j+2) l_{2} (z)
+  \prod_{j=0}^{n-1} (\gamma +j)(\gamma +j+1) l_{3} (z).
\end{align*}
Then, one can easily check that $f_{z_k}^{(n+1)}(z_k) = f_{z_k}^{(n+2)}(z_k) =0$,
$g_{z_k}^{(n)}(z_k) = g_{z_k}^{(n+2)}(z_k) =0$, $h_{z_k}^{(n)}(z_k) = h_{z_k}^{(n+1)}(z_k) =0$ and
\begin{align*}
  f_{z_k}^{(n)}(z_k) = & \frac{2  \prod_{j=0}^{n-1} (\gamma +j)(\gamma +j+1)(\gamma +j+2)}{(\gamma + n)(\gamma +n+1)} \frac{\overline{z_k}^n}{(1-|z_k|^2)^{\gamma+n-1}}, \\
    g_{z_k}^{(n+1)}(z_k) = & - \frac{  \prod_{j=0}^{n-1} (\gamma +j)(\gamma +j+1)(\gamma +j+2)}{\gamma +n+1} \frac{\overline{z_k}^{n+1}}{(1-|z_k|^2)^{\gamma+n}}, \\
      h_{z_k}^{(n)}(z_k) = & 2  \prod_{j=0}^{n-1} (\gamma +j)(\gamma +j+1)(\gamma +j+2 )  \frac{\overline{z_k}^{n+2}}{(1-|z_k|^2)^{\gamma+n+1}}.
\end{align*}
Also the sequences $\{f_{k} \}$, $\{g_{k} \}$ and $\{h_{k} \}$ are bounded  in $\mathcal{Q}_K (p,q)$ which converge to zero uniformly on compact subsets of $\mathbb{D}$, see \cite{ZX}. For every compact operator $K : \mathcal{Q}_K (p,q) \rightarrow {\mathcal B}_{\mu }$, using the norm in weighted Bloch space, we obtain
\begin{align*}
  \| T_{\psi_1, \psi_2, \varphi}^n - K \|_{\mathcal{Q}_K (p,q) \rightarrow \mathcal{B}_{\mu}}  \succeq & \limsup_{k \rightarrow \infty}\| (T_{\psi_1, \psi_2, \varphi}^n - K)f_{\varphi(z_k)} \|_{\mathcal{B}_{\mu}} \\
  \geq & \limsup_{k \rightarrow \infty}\| T_{\psi_1, \psi_2, \varphi}^n  f_{\varphi(z_k)} \|_{\mathcal{B}_{\mu}} \\
  \geq &  \limsup_{k \rightarrow \infty} \sup_{z \in \mathbb{D}} \mu(z) | (T_{\psi_1, \psi_2, \varphi}^n  f_{\varphi(z_k)})'(z)| \\
  \geq & \limsup_{k \rightarrow \infty} \mu(z_k) | (T_{\psi_1, \psi_2, \varphi}^n  f_{\varphi(z_k)})'(z_k)| \\
  = & \limsup_{k \rightarrow \infty} \mu(z_k)  |\psi_1'(z_k) f_{\varphi(z_k)}^{(n)}(\varphi(z))|\\
& +\limsup_{k \rightarrow \infty} \mu(z_k) |(\psi_1(z_k) \varphi'(z_k) + \psi_2 (z))f_{\varphi(z_k)}^{(n+1)}(\varphi(z_k))|\\
& + \limsup_{k \rightarrow \infty} \mu(z_k) |\psi_2(z_k) \varphi'(z_k) f_{\varphi(z_k)}^{(n+1)}(\varphi(z_k))| \\
=& \limsup_{k \rightarrow \infty} \frac{2  \prod_{j=0}^{n-1} (\gamma +j)(\gamma +j+1)(\gamma +j+2)}{(\gamma + n)(\gamma +n+1)}  \\
& \times \frac{\mu(z_k)  |\psi_1'(z_k)| |\overline{\varphi(z_k)}^n|}{(1-|\varphi(z_k)|^2)^{\gamma+n-1}}.
\end{align*}
Hence
\begin{equation*}
 \| T_{\psi_1, \psi_2, \varphi}^n - K \|_{\mathcal{Q}_K (p,q) \rightarrow \mathcal{B}_{\mu}}  \succeq \limsup_{|\varphi(z)|\rightarrow 1}  \frac{\mu(z)  |\psi_1'(z)|}{(1-|\varphi(z)|^2)^{\gamma+n-1}} = A(\psi_1', \varphi, \gamma + n-1 ).
\end{equation*}
Therefore
\begin{equation} \label{r205}
  \| T_{\psi_1, \psi_2, \varphi}^n \|_{e, \mathcal{Q}_K (p,q) \rightarrow \mathcal{B}_{\mu}} = \inf_{K}  \| T_{\psi_1, \psi_2, \varphi}^n - K \|_{\mathcal{Q}_K (p,q) \rightarrow \mathcal{B}_{\mu}}
  \succeq  A(\psi_1', \varphi, \gamma + n-1 ).
\end{equation}
An analogous discussion using the sequence $\{g_{k} \}$ and $\{h_{k} \}$, the followings can be achieved:
\begin{equation*}
  \| T_{\psi_1, \psi_2, \varphi}^n \|_{e, \mathcal{Q}_K (p,q) \rightarrow \mathcal{B}_{\mu}}
  \succeq  A(\psi_1 \varphi' + \psi_2', \varphi, \gamma +n),
\end{equation*}
\begin{equation*}
  \| T_{\psi_1, \psi_2, \varphi}^n \|_{e, \mathcal{Q}_K (p,q) \rightarrow \mathcal{B}_{\mu}}
  \succeq  A(\psi_2 \varphi', \varphi, \gamma + n+1),
\end{equation*}
which along with \eqref{r205} imply the lower estimate of \eqref{r204}.

Let $\{ r_k \} \subset (0,1)$ be a sequence such that $r_k \rightarrow 1$ as $k \rightarrow \infty$.
For any  $k$, 
the operator  $T_{\psi_1, \psi_2, r_k \varphi}^n :\mathcal{Q}_K \left(p,q\right)\to {\mathcal B}_{\mu }$ is bounded, \cite{ZX}. Now from Lemma \ref{l2} we get that
$T_{\psi_1, \psi_2, r_k \varphi}^n :\mathcal{Q}_K \left(p,q\right)\to {\mathcal B}_{\mu }$ is compact. So, definition of the essential norm implies that
\begin{equation}\label{r206}
\|T_{\psi_1, \psi_2,  \varphi}^n \|_{e,\mathcal{Q}_K (p,q) \rightarrow \mathcal{B}_\mu} \leq \|T_{\psi_1, \psi_2,  \varphi}^n -T_{\psi_1, \psi_2,  r_k \varphi}^n \|_{\mathcal{Q}_K (p,q) \rightarrow \mathcal{B}_\mu}.
\end{equation}
We have
\begin{align*}
\|T_{\psi_1, \psi_2,  \varphi}^n -T_{\psi_1, \psi_2,  r_k \varphi}^n \|_{\mathcal{Q}_K (p,q) \rightarrow \mathcal{B}_\mu} = &  \sup_{\|f \|_{\mathcal{Q}_K (p,q)} \leq 1}
\|(T_{\psi_1, \psi_2,  \varphi}^n -T_{\psi_1, \psi_2,  r_k \varphi}^n)f \|_{ \mathcal{B}_\mu} \\
\leq &  \sup_{\|f \|_{\mathcal{Q}_K (p,q)} \leq 1} (
|\psi_1 (0)| | f^{(n)}(\varphi(0)) - f^{(n)}(r_k \varphi(0))| \\
&  + |\psi_2 (0)| | f^{(n+1)}(\varphi(0)) - r_k f^{(n+1)}(r_k \varphi(0))|  \\
 & +\sup_{z\in \mathbb{D}} \mu(z)  |\psi_1'(z)| |  f^{(n)}(\varphi(z))- f^{(n)}(r_k \varphi(z))|\\
& +\sup_{z\in \mathbb{D}} \mu(z) |(\psi_1(z) \varphi'(z) + \psi_2 (z))| \\
& \times  |f^{(n+1)} (\varphi(z)) -  r_k f^{(n+1)} (r_k \varphi(z))|\\
& + \sup_{z\in \mathbb{D}} \mu(z) |\psi_2(z) \varphi'(z) | | f^{(n+2)} (\varphi(z)) -  r_k^2 f^{(n+2)} (\varphi(z))|).
\end{align*}
Since the singleton $\{ \varphi(0) \}$ is compact, then as $k \rightarrow \infty$ we get
$$ |\psi_1 (0)| | f^{(n)}(\varphi(0)) - f^{(n)}(r_k \varphi(0))| \rightarrow 0, \ \ \
|\psi_2 (0)| | f^{(n+1)}(\varphi(0)) - r_k f^{(n+1)}(r_k \varphi(0))|  \rightarrow 0.$$
We divide rest of the above equation into two parts, on is taken on the set $\{ z: |\varphi(z)| \leq \delta \}$, which is compact, and the other is
on the set $\{ z: |\varphi(z)| > \delta \}$. From compactness of the set and using the equations \eqref{r201}-\eqref{r203},  the first part is tends to zero as $k \rightarrow \infty$. For the other part we have
\begin{align*}
\| & T_{\psi_1, \psi_2,  \varphi}^n -  T_{\psi_1, \psi_2,  r_k \varphi}^n \|_{\mathcal{Q}_K (p,q) \rightarrow \mathcal{B}_\mu} \\
 \leq & \sup_{\{ z: |\varphi(z)| > \delta \}} \mu(z)  |\psi_1'(z)| |  f^{(n)}(\varphi(z))- f^{(n)}(r_k \varphi(z))|\\
& +\sup_{\{ z: |\varphi(z)| > \delta \}} \mu(z) |(\psi_1(z) \varphi'(z) + \psi_2 (z))|   |f^{(n+1)} (\varphi(z)) -  r_k f^{(n+1)} (r_k \varphi(z))|\\
& + \sup_{\{ z: |\varphi(z)| > \delta \}} \mu(z) |\psi_2(z) \varphi'(z) | | f^{(n+2)} (\varphi(z)) -  r_k^2 f^{(n+2)} (\varphi(z))| \\
\leq & \sup_{\{ z: |\varphi(z)| > \delta \}} \left( \frac{\mu(z)  |\psi_1'(z)| }{(1-|\varphi(z)|^2)^{\gamma + n -1}} + \frac{\mu(z)  |\psi_1'(z)| }{(1-|r_k \varphi(z)|^2)^{\gamma + n -1}} \right) \\
& + \sup_{\{ z: |\varphi(z)| > \delta \}} \left( \frac{\mu(z) |(\psi_1(z) \varphi'(z) + \psi_2 (z))|}{(1-|\varphi(z)|^2)^{\gamma + n}} +  \frac{\mu(z) |(\psi_1(z) \varphi'(z) + \psi_2 (z))|}{(1-|r_k \varphi(z)|^2)^{\gamma + n}} \right) \\
& + \sup_{\{ z: |\varphi(z)| > \delta \}} \left( \frac{\mu(z) |\psi_2(z) \varphi'(z)|}{(1-|\varphi(z)|^2)^{\gamma + n+1}} +
 \frac{\mu(z) |\psi_2(z) \varphi'(z)|}{(1-|r_k \varphi(z)|^2)^{\gamma + n+1}} \right).
\end{align*}
Therefore
\begin{align*}
\|  T_{\psi_1, \psi_2,  \varphi}^n -  T_{\psi_1, \psi_2,  r_k \varphi}^n \|_{\mathcal{Q}_K (p,q) \rightarrow \mathcal{B}_\mu}
\leq & \sup_{\{ z: |\varphi(z)| > \delta \}} 2 \frac{\mu(z)  |\psi_1'(z)| }{(1-|\varphi(z)|^2)^{\gamma + n -1}}  \\
& + \sup_{\{ z: |\varphi(z)| > \delta \}}  2 \frac{\mu(z) |(\psi_1(z) \varphi'(z) + \psi_2 (z))|}{(1-|\varphi(z)|^2)^{\gamma + n}}  \\
& + \sup_{\{ z: |\varphi(z)| > \delta \}} 2 \frac{\mu(z) |\psi_2(z) \varphi'(z)|}{(1-|\varphi(z)|^2)^{\gamma + n+1}}.
\end{align*}
As $\delta \rightarrow 1$, we obtain
\begin{align*}
\|  T_{\psi_1, \psi_2,  \varphi}^n -  T_{\psi_1, \psi_2,  r_k \varphi}^n \|_{\mathcal{Q}_K (p,q) \rightarrow \mathcal{B}_\mu}
\leq & \limsup_{|\varphi(z)| \rightarrow 1} 2 \frac{\mu(z)  |\psi_1'(z)| }{(1-|\varphi(z)|^2)^{\gamma + n -1}}  \\
& + \limsup_{|\varphi(z)| \rightarrow 1}  2 \frac{\mu(z) |(\psi_1(z) \varphi'(z) + \psi_2 (z))|}{(1-|\varphi(z)|^2)^{\gamma + n}}  \\
& + \limsup_{|\varphi(z)| \rightarrow 1} 2 \frac{\mu(z) |\psi_2(z) \varphi'(z)|}{(1-|\varphi(z)|^2)^{\gamma + n+1}}.
\end{align*}
Hence, \eqref{r206} implies that
 \begin{align*}
 \|T_{\psi_1, \psi_2, \varphi}^n \|_e \preceq
   A(\psi_1', \varphi, \gamma + n-1 )+
A(\psi_1 \varphi' + \psi_2', \varphi, \gamma +n)+
A(\psi_2 \varphi', \varphi, \gamma + n+1),
  \end{align*}
and the proof is completed.
\end{proof}
\section{Essential norm of $T_{\psi_1, \psi_2, \varphi}^{m,n} :H^{\infty}\to {\mathcal B}_{\mu }$}
\begin{lemma}\cite{ZG} \label{l4}
Let $\psi_1, \psi_2 \in \mathcal{H}(\mathbb{D})$, $\varphi \in \mathcal{S}(\mathbb{D})$, $m \in \mathbb{N}_0$, $n \in \mathbb{N}$ and  $\mu$ be a radial weight such that the operator  $T_{\psi_1, \psi_2, \varphi}^{m,n} :H^{\infty}\to {\mathcal B}_{\mu }$ is bounded. Then
$T_{\psi_1, \psi_2, \varphi}^{m,n} :H^{\infty}\to {\mathcal B}_{\mu }$ is compact  if and only if $ \| T_{\psi_1, \psi_2, \varphi}^{m,n} \|_{\mathcal{B}_{\mu}} \rightarrow 0$ as $k \rightarrow \infty$ for each bounded sequence $\{ f_k \}_{k \in \mathbb{N}}$ in $H^{\infty}$ which converges to zero uniformly on compact subsets of $\mathbb{D}$ as $k \rightarrow \infty$.
\end{lemma}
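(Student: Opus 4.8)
The plan is to prove this standard compactness criterion by establishing both implications, using that $\mathcal{B}_\mu$ is a functional Banach space on which norm convergence forces locally uniform convergence, together with Montel's theorem applied to the bounded family in $H^{\infty}$. Throughout I write $T = T_{\psi_1, \psi_2, \varphi}^{m,n}$ for brevity, so that $(Tf)(z) = \psi_1(z) f^{(m)}(\varphi(z)) + \psi_2(z) f^{(n)}(\varphi(z))$.

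For the necessity direction, I would first record the auxiliary fact that convergence in the $\mathcal{B}_\mu$-norm implies locally uniform convergence on $\mathbb{D}$: if $\|h_i - g\|_{\mathcal{B}_\mu} \to 0$, then $|h_i(0) - g(0)| \to 0$ and $\sup_{z} \mu(z)|h_i'(z) - g'(z)| \to 0$, and since $\mu$ is continuous and strictly positive it is bounded below by a positive constant on each disc $\{|z| \leq r\}$, so integrating $h_i' - g'$ along radii yields $\sup_{|z| \leq r}|h_i(z) - g(z)| \to 0$. Now assume $T$ is compact and let $\{f_k\}$ be bounded in $H^{\infty}$ with $f_k \to 0$ locally uniformly. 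By Cauchy's estimates the derivatives $f_k^{(m)}$ and $f_k^{(n)}$ also converge to zero locally uniformly; since for each fixed $z$ the point $\varphi(z)$ lies in a compact subset of $\mathbb{D}$, it follows that $(Tf_k)(z) \to 0$ pointwise. If $\|Tf_k\|_{\mathcal{B}_\mu} \not\to 0$, I pass to a subsequence with $\|Tf_{k_j}\|_{\mathcal{B}_\mu} \geq \varepsilon > 0$; compactness of $T$ lets me extract a further subsequence along which $Tf_{k_{j_i}}$ converges in $\mathcal{B}_\mu$ to some $g$, and by the auxiliary fact this limit is also the pointwise limit, forcing $g = 0$ — contradicting $\|g\|_{\mathcal{B}_\mu} = \lim_i \|Tf_{k_{j_i}}\|_{\mathcal{B}_\mu} \geq \varepsilon$.

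For sufficiency, I assume the sequential condition and show that $T$ maps the closed unit ball $B$ of $H^{\infty}$ to a relatively compact subset of $\mathcal{B}_\mu$. Given any sequence $\{f_k\} \subset B$, its uniform boundedness on $\mathbb{D}$ makes it a normal family, so Montel's theorem yields a subsequence $f_{k_j}$ converging locally uniformly to some $f$ with $\|f\|_{\infty} \leq 1$. Then $g_j := f_{k_j} - f$ is bounded in $H^{\infty}$ and tends to zero locally uniformly, so the hypothesis gives $\|Tg_j\|_{\mathcal{B}_\mu} \to 0$, that is, $Tf_{k_j} \to Tf$ in $\mathcal{B}_\mu$. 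Hence $T(B)$ is sequentially relatively compact, and therefore $T$ is compact.

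The main obstacle is the necessity direction, and specifically the interplay of the two convergences that pins down the candidate limit $g$: I must verify that $\mathcal{B}_\mu$-norm convergence is strong enough to deliver pointwise (indeed locally uniform) convergence, while simultaneously checking that $Tf_k \to 0$ pointwise — the latter hinging on the fact that $\varphi(z)$ stays interior for each fixed $z$, so that the Cauchy-estimate control of $f_k^{(m)}$ and $f_k^{(n)}$ on compact subsets applies. Once these two facts align, the contradiction is immediate; the sufficiency direction is then a routine application of Montel's theorem.
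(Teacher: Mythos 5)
Your proof is correct. Note that the paper gives no proof of this lemma at all --- it is imported wholesale from \cite{ZG} --- and your argument is exactly the standard one for such compactness criteria (cf.\ \cite{CM}, \cite{tj}): Montel's theorem plus linearity of $T$ for sufficiency, and for necessity the extraction of a $\mathcal{B}_\mu$-convergent subsequence whose limit is identified as $0$ via the two facts you isolate, namely that $\mathcal{B}_\mu$-norm convergence forces locally uniform convergence (using positivity of the radial weight $\mu$ on compact discs) and that Cauchy's estimates give $f_k^{(m)}, f_k^{(n)} \to 0$ locally uniformly, hence $(Tf_k)(z) \to 0$ pointwise.
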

Before stating the following theorem, note that since polynomials belong to $H^{\infty}$ then the boundedness of the operator $T_{\psi_1, \psi_2, \varphi}^{m,n} :H^{\infty}\to {\mathcal B}_{\mu }$ implies that
\begin{equation*}
  \sum_{i \in \{ m, m+1 , n, n+1 \}} \sup_{z \in \mathbb{D}} \mu(z) |E_i (z)| < \infty,
\end{equation*}
where
\begin{align*}
E_m (z) = & \psi_1'(z), \ \ \ E_{m+1} (z) = \psi_1 (z) \varphi'(z) \\
E_n (z) = & \psi_2'(z), \ \ \ E_{n+1} (z) = \psi_2 (z) \varphi'(z).
\end{align*}
\begin{theorem}
  Let $\psi_1, \psi_2 \in \mathcal{H}(\mathbb{D})$, $\varphi \in \mathcal{S}(\mathbb{D})$, $m \in \mathbb{N}_0$, $n \in \mathbb{N}$, $m+1 < n$ and  $\mu$ be a radial weight. Suppose that
$T_{\psi_1, \psi_2, \varphi}^{m,n} :H^{\infty}\to {\mathcal B}_{\mu }$ is bounded. Then
\begin{align*}
  \| T_{\psi_1, \psi_2, \varphi}^{m,n} \|_{e, H^{\infty}\to {\mathcal B}_{\mu}}  \approx &  \max_{i \in \{ m, m+1 , n, n+1 \}} \left \{ \limsup_{|\varphi(z)| \rightarrow 1} \frac{\mu(z) |E_i (z)|}{(1-|\varphi(z)|^2)^i} \right \}  \\
  \approx &
  \max_{1 \leq i \leq 4}  \left \{ \limsup_{|\varphi(w)| \rightarrow 1} \| T_{\psi_1, \psi_2, \varphi}^{m,n} f_{i, \varphi(w)} \|_{\mathcal{B}_{\mu}} \right \},
\end{align*}
where $f_{i, \varphi(w)}(z) = \left( \frac{1-|\varphi(w)|}{1-\overline{\varphi(w)}z}\right)^i$.
\end{theorem}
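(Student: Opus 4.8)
The plan is to treat $\rho<1$ and $\rho=1$ separately and, in the main regime, to close a chain of three estimates between the essential norm $N_e:=\|T_{\psi_1,\psi_2,\varphi}^{m,n}\|_{e,H^\infty\to\mathcal B_\mu}$, the weight quantity $N_E:=\max_{i\in\{m,m+1,n,n+1\}}\limsup_{|\varphi(z)|\to1}\frac{\mu(z)|E_i(z)|}{(1-|\varphi(z)|^2)^i}$, and the test quantity $N_f:=\max_{1\le i\le4}\limsup_{|\varphi(w)|\to1}\|T_{\psi_1,\psi_2,\varphi}^{m,n}f_{i,\varphi(w)}\|_{\mathcal B_\mu}$. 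If $\rho<1$, the same argument as in Lemma \ref{l2}, now using Lemma \ref{l4}, Cauchy's estimate, and the boundedness facts $\sum_{i}\sup_z\mu(z)|E_i(z)|<\infty$, shows $T_{\psi_1,\psi_2,\varphi}^{m,n}$ is compact; then $N_e=0$, while $|\varphi(z)|\to1$ never occurs so $N_E=N_f=0$ by convention, and all three agree. Assume henceforth $\rho=1$. I would establish (i) $N_E\lesssim N_f$, (ii) $N_f\le N_e$, and (iii) $N_e\lesssim N_E$; the chain $N_E\lesssim N_f\le N_e\lesssim N_E$ then gives $N_e\approx N_E\approx N_f$.

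Step (ii) is the easy half: since $|1-\overline{a}z|\ge1-|a|$ we have $\|f_{i,a}\|_\infty\le1$, and for fixed $i$ the family $f_{i,a}\to0$ uniformly on compacta as $|a|\to1$. Hence, along any sequence $w_k$ with $|\varphi(w_k)|\to1$, the functions $f_{i,\varphi(w_k)}$ are admissible in Lemma \ref{l4}, so $\|Kf_{i,\varphi(w_k)}\|_{\mathcal B_\mu}\to0$ for every compact $K$ and $\|T_{\psi_1,\psi_2,\varphi}^{m,n}-K\|\ge\limsup_k\|T_{\psi_1,\psi_2,\varphi}^{m,n}f_{i,\varphi(w_k)}\|_{\mathcal B_\mu}$; taking the infimum over $K$ and then the maximum over $i$ yields $N_e\ge N_f$.

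For (i), writing $a=\varphi(w)$ and differentiating, I would record the identity
\[
\mu(w)\bigl(T_{\psi_1,\psi_2,\varphi}^{m,n}f_{i,a}\bigr)'(w)=\sum_{j\in\{m,m+1,n,n+1\}}c_{ij}(a)\,\frac{\overline{a}^{\,j}\,\mu(w)E_j(w)}{(1-|a|^2)^{j}},\qquad c_{ij}(a)=\frac{(i+j-1)!}{(i-1)!\,(1+|a|)^{i}},
\]
which comes from evaluating $f_{i,a}^{(j)}$ at $a$. Since the left-hand side is bounded by $\|T_{\psi_1,\psi_2,\varphi}^{m,n}f_{i,a}\|_{\mathcal B_\mu}$, and as $|a|\to1$ the $4\times4$ matrix $(c_{ij}(a))$ converges to the nonsingular matrix $\bigl(2^{-i}P_j(i)\bigr)$ with $P_j(i)=i(i+1)\cdots(i+j-1)$, inverting it expresses each $\frac{\mu(w)|E_j(w)|}{(1-|a|^2)^{j}}$ as a bounded linear combination of the four norms $\|T_{\psi_1,\psi_2,\varphi}^{m,n}f_{i,a}\|_{\mathcal B_\mu}$. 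Choosing a sequence realizing $N_E$ and using $\limsup_k\max_i=\max_i\limsup_k$ for finitely many indices gives $N_E\lesssim N_f$. For (iii) I would imitate the $r_k\varphi$ argument of Section 3: with $r_k\uparrow1$ the map $r_k\varphi$ has sup-norm $<1$, so $T_{\psi_1,\psi_2,r_k\varphi}^{m,n}$ is compact and $N_e\le\|T_{\psi_1,\psi_2,\varphi}^{m,n}-T_{\psi_1,\psi_2,r_k\varphi}^{m,n}\|$; expanding the derivative of the difference gives four brackets $f^{(j)}(\varphi(z))-r_k^{\ell}f^{(j)}(r_k\varphi(z))$ weighted by $\mu|E_j|$, which I split over $\{|\varphi(z)|\le\delta\}$ (where they vanish as $k\to\infty$, uniformly over $\|f\|_\infty\le1$, by equicontinuity of $\{f^{(j)}\}$ and boundedness of $\mu|E_j|$) and $\{|\varphi(z)|>\delta\}$ (where the $H^\infty$ estimate $|f^{(j)}(w)|\lesssim(1-|w|^2)^{-j}$ and $|r_k\varphi|\le|\varphi|$ bound each term by $\sup_{|\varphi(z)|>\delta}\frac{\mu(z)|E_j(z)|}{(1-|\varphi(z)|^2)^{j}}$). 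Letting $k\to\infty$ then $\delta\to1$ yields $N_e\lesssim N_E$.

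The crux is the nonsingularity in step (i). The hypothesis $m+1<n$ is precisely what makes the four orders $m,m+1,n,n+1$ distinct, so that $P_m,P_{m+1},P_n,P_{n+1}$ are rising factorials of pairwise distinct degrees; their values at the increasing positive nodes $1,2,3,4$ then form a generalized-Vandermonde-type matrix whose nonsingularity is what lets one recover all four weighted quantities from the four test-function norms. Establishing this nonsingularity, together with the uniform bound on the inverse as $|a|\to1$ (which follows by continuity from convergence to a fixed invertible matrix), is the delicate point; the remaining estimates run parallel to the $\mathcal Q_K(p,q)$ theorem of Section 3.
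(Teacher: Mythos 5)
Your architecture is the paper's own: lower bounds from bounded test families tending to zero uniformly on compacta (plus the standard fact that compact operators annihilate such sequences), and the upper bound from the dilations $T^{m,n}_{\psi_1,\psi_2,r_k\varphi}$, which are bounded, hence compact since $\|r_k\varphi\|_\infty<1$, with the split over $\{|\varphi(z)|\le\delta\}$ and $\{|\varphi(z)|>\delta\}$. Your steps (ii) and (iii) reproduce the paper's two estimates; the paper proves $N_e\approx N_E$ exactly this way and dismisses the $N_f$-equivalence as ``the same,'' so your explicit chain $N_E\lesssim N_f\le N_e\lesssim N_E$ is, if anything, more complete, and your separate treatment of $\rho<1$ is fine.

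The genuine gap is in step (i), precisely at the point you yourself flag as delicate. The paper never inverts a matrix: it cites Lemma 1 of \cite{ZG}, which supplies ready-made functions $g_{i,w_k}\in H^\infty$, bounded and tending to zero on compacta, whose derivatives of orders $j\in\{m,m+1,n,n+1\}$ at the relevant point are already diagonalized, $g_{i,w_k}^{(j)}(w_k)=\overline{w_k}^{\,j}\delta_{ij}(1-|w_k|^2)^{-j}$; feeding these into the lower-bound computation yields $N_e\gtrsim N_E$ at once. You instead use the raw powers $f_{i,a}$ and must invert the limit matrix $\bigl(2^{-i}P_j(i)\bigr)_{1\le i\le 4,\ j\in\{m,m+1,n,n+1\}}$, where $P_j(i)=i(i+1)\cdots(i+j-1)$. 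That matrix is indeed invertible, but your justification --- polynomials of distinct degrees evaluated at increasing positive nodes form a ``generalized-Vandermonde-type'' matrix --- is not a proof, and as a general principle it is false: $p_1\equiv 1$ and $p_2(x)=(x-1)(x-2)$ have distinct degrees, yet their value matrix at the nodes $1,2$ is singular. Since the rising factorials of the non-consecutive degrees $m,m+1,n,n+1$ are not triangularly related to the monomials $x^m,x^{m+1},x^n,x^{n+1}$ inside a common four-dimensional space, nonsingularity of the honest generalized Vandermonde matrix $(i^{\lambda_j})$ does not transfer formally. A correct argument does exist: writing $P_{\lambda}(x)=\Gamma(x+\lambda)/\Gamma(x)$ and $\Gamma(x+\lambda)=\int_0^\infty t^{x}\,t^{\lambda}\,t^{-1}e^{-t}\,dt$, the Andr\'eief (basic composition) formula gives, with $x_i=i$ and $\{\lambda_1<\lambda_2<\lambda_3<\lambda_4\}=\{m,m+1,n,n+1\}$,
\begin{equation*}
\det\bigl(\Gamma(x_i+\lambda_j)\bigr)_{i,j=1}^{4}
=\int_{0<t_1<\cdots<t_4}\det\bigl(t_l^{\,x_i}\bigr)\,\det\bigl(t_l^{\,\lambda_j}\bigr)\,\prod_{l=1}^{4}t_l^{-1}e^{-t_l}\,dt_l>0,
\end{equation*}
because each determinant in the integrand is a genuine generalized Vandermonde determinant with increasing positive nodes and distinct exponents, hence positive; dividing row $i$ by $\Gamma(x_i)>0$ then shows $(P_{\lambda_j}(x_i))$ is nonsingular, and the uniform bound on the inverse of $(c_{ij}(a))$ for $|a|$ near $1$ follows by continuity as you say. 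So your route does close, but only once this is supplied; alternatively, citing Lemma 1 of \cite{ZG} as the paper does bypasses the linear algebra entirely, since the existence of the diagonalizing $g_{i,w_k}$ with uniformly bounded $H^\infty$ norms is exactly the invertibility statement you need, packaged with the bound on the coefficients.
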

\begin{proof}
The first part of the theorem will be proved, the other is the same. Let $\left\{{w}_k \right\}$ be a sequence in $\mathbb{D}$ such that   $\lim_{k\to \infty }\left|\varphi \left({w}_k\right)\right|=1$. According to Lemma 1 of \cite{ZG}, there exist the functions $g_{i,w_k} \in H^{\infty}$ such that
$$ g_{i,w_k}^{j} = \frac{\overline{w_k}^j \delta_{ij}}{(1-|w_k|^2)^j}, \ \ \ i,j \in \{ m, m+1 , n, n+1 \}.  $$
Also the sequence $\{ g_{i, w_k} \}$ is bounded in $H^{\infty}$ and converges to 0 uniformly on compact subsets of $\mathbb{D}$.
So for every compact operator $K :  H^{\infty} \rightarrow {\mathcal B}_{\mu }$, $\lim_{k \rightarrow \infty} \| K g_{i,w_k} \|_{\mathcal{B}_{\mu}} =0$.  using the norm in weighted Bloch space, we obtain
\begin{align*}
  \| T_{\psi_1, \psi_2, \varphi}^{m,n} - K \|_{H^{\infty} \rightarrow \mathcal{B}_{\mu}}  \succeq & \limsup_{k \rightarrow \infty}\| (T_{\psi_1, \psi_2, \varphi}^{m,n} - K)g_{i, \varphi(w_k)} \|_{\mathcal{B}_{\mu}} \\
  \geq & \limsup_{k \rightarrow \infty}\| T_{\psi_1, \psi_2, \varphi}^{m,n}  g_{i, \varphi(w_k)} \|_{\mathcal{B}_{\mu}} \\
  \geq &  \limsup_{k \rightarrow \infty} \sup_{z \in \mathbb{D}} \mu(z) | (T_{\psi_1, \psi_2, \varphi}^{m,n}  g_{i, \varphi(w_k)})'(z)| \\
  \geq & \limsup_{k \rightarrow \infty} \mu(w_k) | (T_{\psi_1, \psi_2, \varphi}^{m,n}  g_{i, \varphi(w_k)})'(w_k)| \\
  = & \limsup_{k \rightarrow \infty} \sum_{j \in \{ m, m+1 , n, n+1 \}} \mu(w_k)
  |E_j (w_k) (w_k) g_{i, \varphi(w_k)}^{(j)} (\varphi(w_k))| \\
  = & \limsup_{k \rightarrow \infty} \frac{\mu(w_k) |E_{i} (w_k) \overline{\varphi(w_k)}^i| }{(1-|\varphi(w_k)|^2)^i} \\
  =& \limsup_{k \rightarrow \infty} \frac{\mu(w_k) |E_{i} (w_k)| }{(1-|\varphi(w_k)|^2)^i}.
\end{align*}
Therefore
\begin{equation} \label{r208}
  \| T_{\psi_1, \psi_2, \varphi}^{m,n} \|_{H^{\infty} \rightarrow \mathcal{B}_{\mu}} = \inf_{K}  \| T_{\psi_1, \psi_2, \varphi}^{m,n} - K \|_{H^{\infty} \rightarrow \mathcal{B}_{\mu}}
  \succeq  \limsup_{|\varphi(z)| \rightarrow 1} \frac{\mu(z) |E_i (z)|}{(1-|\varphi(z)|^2)^i},
\end{equation}
for every $i \in \{ m, m+1 , n, n+1 \}$.

Now, let $\{ r_k \} \subset (0,1)$ be a sequence such that $r_k \rightarrow 1$ as $k \rightarrow \infty$. Since $T_{\psi_1, \psi_2, \varphi}^{m,n} :H^{\infty}\to {\mathcal B}_{\mu }$ is bounded, we have
$$ \sum_{j \in \{ m, m+1 , n, n+1 \}} \sup_{z \in \mathbb{D}} \frac{\mu (z) |E_j (z)|}{(1-|\varphi(z)|^2)^j} < \infty, $$
see \cite{ZG}, Theorem 1. It is easy to see that if we replace $r_k \varphi$ by $\varphi$, then we have
$$ \sum_{j \in \{ m, m+1 , n, n+1 \}} \sup_{z \in \mathbb{D}} \frac{\mu (z) |E_j (z)|}{(1-|r_k \varphi(z)|^2)^j} < \infty,  $$
which implies that $T_{\psi_1, \psi_2, r_k \varphi}^{m,n} :H^{\infty}\to {\mathcal B}_{\mu}$ be  bounded. So it is compact. Then
\begin{equation}\label{r209}
\|T_{\psi_1, \psi_2,  \varphi}^{m,n} \|_{e,H^{\infty} \rightarrow \mathcal{B}_\mu} \leq \|T_{\psi_1, \psi_2,  \varphi}^{m,n} -T_{\psi_1, \psi_2,  r_k \varphi}^{m,n} \|_{H^{\infty} \rightarrow \mathcal{B}_\mu}.
\end{equation}
For simplifying the calculations, we set $f_r (z) = f(rz)$, $0 < r < 1$. Then we have
\begin{align*}
\|T_{\psi_1, \psi_2,  \varphi}^{m,n} -T_{\psi_1, \psi_2,  r_k \varphi}^{m,n} \|_{H^{\infty} \rightarrow \mathcal{B}_\mu} = &  \sup_{\|f \|_{H^{\infty}} \leq 1}
\|(T_{\psi_1, \psi_2,  \varphi}^{m,n} -T_{\psi_1, \psi_2,  r_k \varphi}^{m,n})f \|_{ \mathcal{B}_\mu} \\
\leq &  \sup_{\|f \|_{H^{\infty}} \leq 1} (
|\psi_1 (0)| | f^{(m)}(\varphi(0)) - f^{(m)}(r_k \varphi(0))| \\
&  + |\psi_2 (0)| | f^{(n)}(\varphi(0)) - r_k f^{(n)}(r_k \varphi(0))|)   \\
 & +  \sup_{\|f \|_{H^{\infty}} \leq 1} \sum_{i \in \{ m, m+1 , n, n+1 \}} \sup_{z\in \mathbb{D}}  \mu(z) |E_i (z) (f-f_{r_k})^{(i)}(\varphi(z))|.
\end{align*}
Letting $k \rightarrow \infty$ we get
$$ |\psi_1 (0)| | f^{(m)}(\varphi(0)) - f^{(m)}(r_k \varphi(0))| \rightarrow 0, \ \ \
|\psi_2 (0)| | f^{(n)}(\varphi(0)) - r_k f^{(n)}(r_k \varphi(0))|  \rightarrow 0.$$
Also
\begin{align*}
 \sum_{i \in \{ m, m+1 , n, n+1 \}} &  \sup_{z\in \mathbb{D}}  \mu(z) |E_i (z) (f-f_{r_k})^{(i)}(\varphi(z))| \\
  \leq &  \sum_{i \in \{ m, m+1 , n, n+1 \}} \sup_{|\varphi(z)| \leq \delta}  \mu(z) |E_i (z) (f-f_{r_k})^{(i)}(\varphi(z))| \\
 & +  \sum_{i \in \{ m, m+1 , n, n+1 \}} \sup_{|\varphi(z)| > \delta}  \mu(z) |E_i (z) (f-f_{r_k})^{(i)}(\varphi(z))|
\end{align*}
and letting $k \rightarrow \infty$ we obtain
$$ \sum_{i \in \{ m, m+1 , n, n+1 \}} \sup_{|\varphi(z)| \leq \delta}  \mu(z) |E_i (z) (f-f_{r_k})^{(i)}(\varphi(z))| \rightarrow 0. $$
Moreover for every $f \in H^{\infty}$, $\| f \|_{\infty} \leq 1$ we get
\begin{align*}
 \sum_{i \in \{ m, m+1 , n, n+1 \}} & \sup_{|\varphi(z)| > \delta}  \mu(z) |E_i (z) (f-f_{r_k})^{(i)}(\varphi(z))| \\
 \leq &  \sum_{i \in \{ m, m+1 , n, n+1 \}} \sup_{|\varphi(z)| > \delta}  \mu(z) |E_i (z)| |  f^{(i)}(\varphi(z))| \\
 & + \sum_{i \in \{ m, m+1 , n, n+1 \}} \sup_{|\varphi(z)| > \delta}  \mu(z) |E_i (z)| | r_k^{i} f^{(i)}(r_k\varphi(z))| \\
 \preceq & \sum_{i \in \{ m, m+1 , n, n+1 \}} \sup_{|\varphi(z)| > \delta} \frac{\mu(z) |E_i (z)|}{(1-|\varphi(z)|^2)^i } \| f \|_{\infty} \\
 & + \sum_{i \in \{ m, m+1 , n, n+1 \}} \sup_{|\varphi(z)| > \delta} \frac{\mu(z) r_k^i  |E_i (z)|}{(1-|r_k \varphi(z)|^2)^i } \| f \|_{\infty} \\
 \preceq & \sum_{i \in \{ m, m+1 , n, n+1 \}} \sup_{|\varphi(z)| > \delta} \frac{\mu(z) |E_i (z)|}{(1-|\varphi(z)|^2)^i }.
\end{align*}
If $\delta \rightarrow 1$, then
\begin{equation*}
  \|T_{\psi_1, \psi_2,  \varphi}^{m,n} -T_{\psi_1, \psi_2,  r_k \varphi}^{m,n} \|_{H^{\infty} \rightarrow \mathcal{B}_\mu}  \preceq  \sum_{i \in \{ m, m+1 , n, n+1 \}} \limsup_{|\varphi(z)| \rightarrow 1} \frac{\mu(z) |E_i (z)|}{(1-|\varphi(z)|^2)^i}.
\end{equation*}
Finally
\begin{equation}\label{r211}
\|T_{\psi_1, \psi_2,  \varphi}^{m,n} \|_{e,H^{\infty} \rightarrow \mathcal{B}_\mu}
\preceq  \sum_{i \in \{ m, m+1 , n, n+1 \}} \limsup_{|\varphi(z)| \rightarrow 1} \frac{\mu(z) |E_i (z)|}{(1-|\varphi(z)|^2)^i}.
\end{equation}
\end{proof}
By using the same method we can prove the following theorem.
\begin{theorem}
  Let $\psi_1, \psi_2 \in \mathcal{H}(\mathbb{D})$, $\varphi \in \mathcal{S}(\mathbb{D})$, $m \in \mathbb{N}_0$, $n \in \mathbb{N}$, $m+1 = n$ and  $\mu$ be a radial weight. Suppose that
$T_{\psi_1, \psi_2, \varphi}^{m,n} :H^{\infty}\to {\mathcal B}_{\mu }$ is bounded. Then
\begin{align*}
  \| T_{\psi_1, \psi_2, \varphi}^{m,n} \|_{e, H^{\infty}\to {\mathcal B}_{\mu}}  \approx & \max_{i \in \{ m, n+1 \}} \left \{ \limsup_{|\varphi(z)| \rightarrow 1} \frac{\mu(z) |E_i (z)|}{(1-|\varphi(z)|^2)^i}, \limsup_{|\varphi(z)| \rightarrow 1} \frac{\mu(z) |\psi_1 (z) \varphi'(z) + \psi_2'(z)|}{(1-|\varphi(z)|^2)^n} \right \} \\
  \approx &   \max_{1 \leq i \leq 3}  \{ \limsup_{|\varphi(w)| \rightarrow 1} \| T_{\psi_1, \psi_2, \varphi}^{m,n} f_{i, \varphi(w)} \|_{\mathcal{B}_{\mu}}  \}.
\end{align*}
\end{theorem}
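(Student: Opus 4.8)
The plan is to mirror the proof of the previous theorem, the only structural change arising from the coincidence of two derivative levels. \emph{First} I would record the key observation that when $m+1=n$ the expansion
\begin{align*}
(T_{\psi_1,\psi_2,\varphi}^{m,n}f)'(z)=&\,\psi_1'(z)f^{(m)}(\varphi(z))+\psi_1(z)\varphi'(z)f^{(m+1)}(\varphi(z))\\
&+\psi_2'(z)f^{(n)}(\varphi(z))+\psi_2(z)\varphi'(z)f^{(n+1)}(\varphi(z))
\end{align*}
collapses: since $f^{(m+1)}=f^{(n)}$, the two middle summands merge into the single term $(\psi_1(z)\varphi'(z)+\psi_2'(z))f^{(n)}(\varphi(z))$. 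Hence the differentiated operator sees only the three distinct levels $m$, $n$, $n+1$, with coefficients $E_m=\psi_1'$, $\psi_1\varphi'+\psi_2'$ and $E_{n+1}=\psi_2\varphi'$, which is exactly why the maximum in the statement ranges over three quantities instead of four.

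\emph{For the lower bound} I would choose $\{w_k\}\subset\mathbb{D}$ with $|\varphi(w_k)|\to1$ and invoke Lemma~1 of \cite{ZG} to produce test functions $g_{i,w_k}\in H^\infty$, now only for the three indices $i\in\{m,n,n+1\}$, satisfying $g_{i,w_k}^{(j)}(w_k)=\overline{w_k}^{\,j}\delta_{ij}/(1-|w_k|^2)^j$ for $i,j\in\{m,n,n+1\}$, bounded in $H^\infty$ and tending to $0$ uniformly on compacta. For any compact $K:H^\infty\to\mathcal{B}_\mu$ one has $\|Kg_{i,\varphi(w_k)}\|_{\mathcal{B}_\mu}\to0$, so bounding $\|(T_{\psi_1,\psi_2,\varphi}^{m,n}-K)g_{i,\varphi(w_k)}\|_{\mathcal{B}_\mu}$ below by $\mu(w_k)|(T_{\psi_1,\psi_2,\varphi}^{m,n}g_{i,\varphi(w_k)})'(w_k)|$ isolates one surviving term: for $i=m$ and $i=n+1$ it is $\mu(w_k)|E_i(w_k)|/(1-|\varphi(w_k)|^2)^i$, while for $i=n$ the biorthogonality picks out precisely the merged coefficient, giving $\mu(w_k)|\psi_1(w_k)\varphi'(w_k)+\psi_2'(w_k)|/(1-|\varphi(w_k)|^2)^n$. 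Passing to $\limsup_k$, then the infimum over $K$, and finally the maximum over the three indices delivers the lower estimate.

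\emph{For the upper bound} I would again use dilations, fixing $\{r_k\}\subset(0,1)$ with $r_k\to1$. As in the bounded case, the finiteness of $\sum_j\sup_z\mu(z)|E_j(z)|/(1-|\varphi(z)|^2)^j$ is inherited after replacing $\varphi$ by $r_k\varphi$, so each $T_{\psi_1,\psi_2,r_k\varphi}^{m,n}$ is bounded, hence compact, yielding $\|T_{\psi_1,\psi_2,\varphi}^{m,n}\|_e\le\|T_{\psi_1,\psi_2,\varphi}^{m,n}-T_{\psi_1,\psi_2,r_k\varphi}^{m,n}\|$. Writing $f_{r_k}(z)=f(r_kz)$ and expanding the difference by the \emph{three}-term derivative formula, I would split each supremum into $\{|\varphi(z)|\le\delta\}$ and $\{|\varphi(z)|>\delta\}$. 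The compact part tends to $0$ as $k\to\infty$ by uniform convergence on compacta together with the boundedness bounds $\sup_z\mu(z)|E_i(z)|<\infty$, while the outer part is dominated, uniformly over $\|f\|_\infty\le1$, by $\sum_{i\in\{m,n,n+1\}}\sup_{|\varphi(z)|>\delta}\mu(z)|\widetilde E_i(z)|/(1-|\varphi(z)|^2)^i$, where $\widetilde E_i=E_i$ for $i\in\{m,n+1\}$ and $\widetilde E_n=\psi_1\varphi'+\psi_2'$. Letting $\delta\to1$ turns these suprema into the three $\limsup$ quantities, proving the upper estimate and hence the first $\approx$; the second $\approx$ with the functions $f_{i,\varphi(w)}$, $1\le i\le3$, follows from the same two-sided comparison as in the previous theorem.

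\emph{The main obstacle} is bookkeeping rather than a new idea: one must ensure that the merging of the two middle terms is reflected consistently in both directions. The lower bound needs exactly three biorthogonal test functions, and the $i=n$ function must isolate the \emph{sum} $\psi_1\varphi'+\psi_2'$ rather than either summand alone; the upper bound must group $\psi_1(z)\varphi'(z)f^{(n)}(\varphi(z))+\psi_2'(z)f^{(n)}(\varphi(z))$ before estimating, so that the single growth factor $(1-|\varphi(z)|^2)^{-n}$ governs the merged coefficient. Once this grouping is in place, every remaining inequality repeats the $m+1<n$ argument verbatim.
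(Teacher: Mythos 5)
Your overall plan is the one the paper intends (its own ``proof'' of this theorem is literally the sentence ``by using the same method''), and your lower bound is sound: with three biorthogonal test functions at the consecutive levels $m$, $n=m+1$, $n+1$, the $i=n$ function correctly isolates the merged coefficient $\psi_1\varphi'+\psi_2'$. The genuine gap is in your upper bound, and it sits exactly at the spot you dismissed as bookkeeping: \emph{dilating the symbol destroys the merging}. Indeed,
\begin{align*}
(T^{m,n}_{\psi_1,\psi_2,r_k\varphi}f)'(z)=\;&\psi_1'(z)f^{(m)}(r_k\varphi(z))+\bigl(r_k\psi_1(z)\varphi'(z)+\psi_2'(z)\bigr)f^{(n)}(r_k\varphi(z))\\
&+r_k\psi_2(z)\varphi'(z)f^{(n+1)}(r_k\varphi(z)),
\end{align*}
because the chain rule puts a factor $r_k$ on $\psi_1\varphi'$ but not on $\psi_2'$. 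So the middle coefficient of the dilated operator is $r_k\psi_1\varphi'+\psi_2'$, which is \emph{not} a multiple of $\psi_1\varphi'+\psi_2'$. Two of your steps break as a result. First, boundedness of $T^{m,n}_{\psi_1,\psi_2,r_k\varphi}$ is not ``inherited'': the $m+1=n$ characterization applied to the symbol $r_k\varphi$ requires $\sup_z\mu(z)|r_k\psi_1\varphi'+\psi_2'|<\infty$, and since $r_k\psi_1\varphi'+\psi_2'=(\psi_1\varphi'+\psi_2')-(1-r_k)\psi_1\varphi'$, this needs control of $\mu|\psi_1\varphi'|$ \emph{alone}. Second, the difference $(T^{m,n}_{\psi_1,\psi_2,\varphi}f-T^{m,n}_{\psi_1,\psi_2,r_k\varphi}f)'$ does not expand ``by the three-term derivative formula'': the middle terms leave the cross term $(1-r_k)\psi_1(z)\varphi'(z)f^{(n)}(r_k\varphi(z))$, again governed by the unmerged quantity. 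And boundedness of $T^{m,n}_{\psi_1,\psi_2,\varphi}$ gives no bound on $\mu|\psi_1\varphi'|$ (equivalently $\mu|\psi_2'|$) separately: when $m+1=n$ the derivative of $T^{m,n}_{\psi_1,\psi_2,\varphi}f$ only ever involves $\psi_1'$, $\psi_1\varphi'+\psi_2'$ and $\psi_2\varphi'$, so no test function --- in particular no polynomial --- can see the two middle summands individually.

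The fix is to change the compact approximants: instead of dilating the symbol, precompose with the dilation, i.e.\ take $K_k=T^{m,n}_{\psi_1,\psi_2,\varphi}\circ C_{r_k}$, where $C_{r_k}f=f_{r_k}$ and $f_{r_k}(z)=f(r_kz)$. The operator $C_{r_k}$ is compact on $H^{\infty}$ (functions in the image of the unit ball are analytic and bounded by $1$ on $|z|<1/r_k$, so Montel's theorem gives a subsequence converging uniformly on $\overline{\mathbb{D}}$, i.e.\ in the $H^{\infty}$ norm), hence $K_k$ is compact with no coefficient characterization needed at all. Moreover $\bigl(T^{m,n}_{\psi_1,\psi_2,\varphi}(f-f_{r_k})\bigr)'$ \emph{does} expand with exactly the three merged coefficients against $(f-f_{r_k})^{(i)}(\varphi(z))$, $i\in\{m,n,n+1\}$, and $(f-f_{r_k})^{(i)}(w)=f^{(i)}(w)-r_k^{i}f^{(i)}(r_kw)$ is dominated by $2C_i\|f\|_{\infty}(1-|w|^2)^{-i}$; from there your $\delta$-splitting and the limits $k\to\infty$, $\delta\to1$ run verbatim and give the stated upper estimate. (The same replacement is harmless in the $m+1<n$ theorem; there the four coefficients never merge, which is the only reason the symbol-dilation variant works in that case.)
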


\end{document}